\numberwithin{figure}{section}          
\numberwithin{equation}{section}        
\renewcommand{\Re}{{\mathbb R}}         
\newcommand{\abs}[1]{\lvert#1\rvert}    
\theoremstyle{plain}
\newtheorem{thm}{Theorem}[section]
\title[Forward Euler for Differential Inclusions]{The
Forward Euler Scheme for Nonconvex Lipschitz Differential Inclusions
Converges With Rate One}
\subjclass[2000]{34A60, 65L20, 49M25}
\author{Mattias Sandberg}
\address{Centre of Mathematics for Applications (CMA) c/o Dept of
  Mathematics\\
Box 1053 Blindern\\
NO-0316 Oslo\\
NORWAY
}
\email{mattias.sandberg@cma.uio.no}
\begin{document}
\begin{abstract}
In a previous paper it was shown that the Forward Euler method applied
to differential inclusions where the right-hand side is a Lipschitz
continuous set-valued function with uniformly bounded, compact values,
converges with rate one. The convergence, which was there in the sense
of reachable sets, is in this paper strengthened to the sense of
convergence of solution paths. An improvement of the error constant is
given for the case when the set-valued function consists of a small
number of smooth ordinary functions.
\end{abstract}
\maketitle
\tableofcontents
\section{Introduction}
In this paper we extend the convergence result from
\cite{Sandberg-DI1}. The following differential inclusion is
considered:
\begin{equation}\label{eq:DI}
\begin{split}
x'(t)&\in F\big(x(t)\big),\\
x(0)&=x_0,
\end{split}
\end{equation}
where $x_0\in\Re\sp d$, and $F$ is a function from $\Re\sp d$ to the
compact subsets of $\Re\sp d$. 
In \cite{Sandberg-DI1}, it is shown
that if $F$ is uniformly bounded in the sense that 
\begin{equation}\label{eq:Fbound}
\abs{y}\leq K,\quad \text{for all }y\in\bigcup_{x\in\Re\sp d} F(x),
\end{equation}
and Lipschitz continuous with respect to Hausdorff distance,
\begin{equation}\label{eq:FLip}
\mathcal{H}\big(F(x),F(y)\big) \leq L\abs{x-y},
\end{equation}
then the Forward Euler method converges with rate one. For the
definition of the Hausdorff
distance we need the following notation (as in
\cite{Sandberg-DI1}). We denote by $B$ the closed unit ball in $\Re\sp
d$. The Minkowski sum of two non-empty sets $C,D \subset\Re\sp d$ is
defined by
\begin{equation*}
C+D = \big\{c+d\ |\ c\in C \text{ and } d\in D\big\},
\end{equation*}
the multiplication by a scalar, $\lambda>0$, by
\begin{equation*}
\lambda C = \big\{\lambda c\ |\ c\in C\big\},
\end{equation*}
and the sum of an element $c\in\Re\sp d$ and a set $C$ by
\begin{equation*}
c+C=\big\{c\big\}+C.
\end{equation*}
The Hausdorff distance is given by
\begin{equation*}
\mathcal{H}(C,D) = \inf \big\{\lambda \geq 0\ |\ C \subset D+\lambda B
\text{ and } D \subset C + \lambda B \big\}.
\end{equation*}
We will denote by $\abs{\cdot}$ the Euclidean norm, when applied to a
vector, and the Euclidean operator norm, when applied to a matrix.
We consider solutions $x:[0,T]\rightarrow\Re\sp d$ to the differential inclusion \eqref{eq:DI} in
the finite time interval $[0,T]$. A solution is an absolutely
continuous function which satisfies \eqref{eq:DI} a.e. For the Forward
Euler method we split the interval $[0,T]$ into $N$ parts of equal
length $\Delta t=T/N$. The Forward Euler scheme is given by
\begin{equation}\label{eq:FE}
\begin{split}
\xi_{n+1} &\in \xi_n + \Delta t F(\xi_n), \quad n=0,1, \ldots ,N-1, \\
\xi_0&=x_0.
\end{split}
\end{equation}

The convergence result in \cite{Sandberg-DI1} concerns the reachable sets
\begin{equation*}
\begin{split}
C_n &= \big\{x(n\Delta t)\ |\ x:[0,T]\to \Re^d \text{ solution to
  \eqref{eq:DI}}\big\}, \\
D_n &= \big\{\xi_n\ |\ \{\xi_i\}_{i=0}^N \text{ solution to
  \eqref{eq:FE}}\big\}. 
\end{split}
\end{equation*}
It was shown there that under the assumptions in \eqref{eq:Fbound} and
\eqref{eq:FLip} the following bound holds:
\begin{equation}\label{eq:ConvReachSets}
\max_{0 \leq n \leq N} \mathcal{H}(C_n,D_n) \leq
\Big(Ke^{LT}\big(Kd(d+1)+LT\big) +2Kd \Big)\Delta t.
\end{equation}
This was an extension of the previous first order convergence result
in \cite{Dontchev-Farkhi1989} in the sense that the set-valued
function $F$ did not need to be convex. In \cite{Grammel}, the
non-convex case was presented, although in a different form (see
\cite{Sandberg-DI1}), but there only half-order convergence was
proved. Although the convergence of the reachable sets in
\eqref{eq:ConvReachSets} is what is needed in many situations, e.g.\
in optimal control (see \cite{Sandberg-DI1}), it is weaker than the
convergence of solution paths, the type of convergence used in e.g.\
\cite{Dontchev-Farkhi1989} and \cite{Grammel}. In section \ref{sec:results} we
show that the first-order convergence result for non-convex
differential inclusions can be extended so that it gives convergence
of solution paths. The proof is actually only a minor change of the
proof in \cite{Sandberg-DI1}.
Another weakness with the convergence result in
\eqref{eq:ConvReachSets} is that the constant  depends quadratically
on the dimension. In \cite{Sandberg-DI1} it was shown that this
constant can not be expected  to be smaller than of order $\sqrt d$ in
general. In section \ref{sec:results} a partial improvement is given for the case
where the differential inclusion is a control problem with few control
parameters. 
In section \ref{sec:FD} two results which are needed in the proof of
the theorem involving few control parameters are presented. 

\section{The Results}\label{sec:results} 
We introduce the same set-valued maps that was used in
\cite{Sandberg-DI1}. 
Let $\varphi$ and $\psi$ be
functions from $\Re^d$ into the non-empty compact subsets of $\Re^d$, defined by
\begin{equation*}
\begin{split}
\varphi(x) &= x + \Delta t F(x), \\
\psi(x) &= x + \Delta t\, \text{co}\big(F(x)\big),
\end{split}
\end{equation*}
where co denotes the convex hull.
If $A$ is a subset of $\Re^d$ we define
\begin{equation*}
\varphi(A)= \bigcup_{x\in A}\varphi(x),
\end{equation*}
and similarly for the set-valued maps  $\psi$ and $F$
We will use the following result for convex differential
inclusions. It is taken from \cite{Dontchev-Farkhi1989}, where
it is formulated in a slightly more general setting than the one
presented here.
\begin{thm}\label{thm:ConvexPaths}
Let $F$ be a function from $\Re^d$ into the non-empty compact convex subsets of
$\Re^d$, which satisfies \eqref{eq:FLip} and \eqref{eq:FLip}.
For any solution 
$x : [0,T]\to \Re^d$ to \eqref{eq:DI} there exists a
solution $\{\eta_n\}_{n=0}^N$ to \eqref{eq:FE} such that
\begin{equation}\label{eq:pathapprox} 
\max_{0 \leq n \leq N} |x(n\Delta t)-\eta_n| \leq KLTe^{LT}\Delta t.
\end{equation}
Moreover, for any solution $\{\eta_n\}_{n=0}^N$ to \eqref{eq:FE} there
exists a solution $x:[0,T]\to\Re^d$ to \eqref{eq:DI} such that \eqref{eq:pathapprox} holds.
\end{thm}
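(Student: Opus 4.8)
The plan is to prove the two approximation statements separately, both reducing to a single mechanism: control of the defect between a candidate velocity and the value of $F$ at the candidate point, followed by a Gronwall-type estimate. Throughout, the Lipschitz bound \eqref{eq:FLip} controls how much $F$ moves as its argument moves, the bound \eqref{eq:Fbound} bounds all velocities by $K$, and \emph{convexity of the values of $F$ is precisely what makes rate one (rather than rate one-half) possible}.

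For the first statement, given a solution $x$ I would build the Euler solution inductively, setting $\eta_0 = x_0$ and, at step $n$, choosing a velocity $w_n\in F(\eta_n)$ as close as possible to the mean velocity $\bar v_n = \Delta t^{-1}\big(x((n+1)\Delta t)-x(n\Delta t)\big)$ of $x$ on the subinterval, then putting $\eta_{n+1}=\eta_n+\Delta t\,w_n$. The key step is to note that since $x'(s)\in F(x(s))$ and $\mathcal{H}(F(x(s)),F(x(n\Delta t)))\le LK\Delta t$ on the subinterval, each $x'(s)$ lies in the convex set $F(x(n\Delta t))+LK\Delta t\,B$; by convexity so does the average $\bar v_n$. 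Combining this with $\mathcal{H}(F(x(n\Delta t)),F(\eta_n))\le L|x(n\Delta t)-\eta_n|$ lets me pick $w_n\in F(\eta_n)$ with $|w_n-\bar v_n|\le LK\Delta t + L|x(n\Delta t)-\eta_n|$. Writing $e_n=|x(n\Delta t)-\eta_n|$ this gives the recursion $e_{n+1}\le (1+L\Delta t)e_n + LK(\Delta t)^2$ with $e_0=0$, whose solution $e_n\le K\Delta t\big((1+L\Delta t)^n-1\big)\le KLTe^{LT}\Delta t$ is exactly \eqref{eq:pathapprox}.

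For the converse, given an Euler solution with velocities $w_n\in F(\eta_n)$, I would form the piecewise-linear interpolant $y$ with $y(0)=x_0$ and $y'(t)=w_n$ on $[n\Delta t,(n+1)\Delta t]$, so that $y(n\Delta t)=\eta_n$. Since $|y(t)-\eta_n|\le K\Delta t$ on each subinterval, \eqref{eq:FLip} yields $\text{dist}(y'(t),F(y(t)))=\text{dist}(w_n,F(y(t)))\le LK\Delta t$ a.e. I would then invoke Filippov's theorem, which produces a genuine solution $x$ with $x(0)=x_0$ and $|x(t)-y(t)|\le\int_0^t e^{L(t-s)}\rho(s)\,ds$ for $\rho(s)=LK\Delta t$; evaluating at $t=n\Delta t$ and using $y(n\Delta t)=\eta_n$ gives $|x(n\Delta t)-\eta_n|\le K\Delta t(e^{LT}-1)\le KLTe^{LT}\Delta t$, again \eqref{eq:pathapprox}.

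The main obstacle is the convexity-averaging step in the first direction: it is exactly the assertion that the mean velocity $\bar v_n$, an average of selections drawn from the slightly varying sets $F(x(s))$, still lies within $O(\Delta t)$ of the single set $F(x(n\Delta t))$. This fails for nonconvex $F$ and is the reason the convex theory attains rate one; everything else is bookkeeping together with the discrete Gronwall inequality. In the converse direction the real content is outsourced to Filippov's theorem, whose proof itself rests on a measurable-selection lemma and an iterative contraction argument; I would cite it rather than reprove it.
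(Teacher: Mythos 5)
Your proposal is correct, but there is nothing in the paper to compare it against: the paper does not prove Theorem \ref{thm:ConvexPaths} at all; it imports the statement from \cite{Dontchev-Farkhi1989} (``It is taken from \cite{Dontchev-Farkhi1989}, where it is formulated in a slightly more general setting''). So your argument supplies a proof where the paper gives only a citation, and the proof you give is essentially the classical one. In the forward direction the chain $x'(s)\in F(x(s))\subset F(x(n\Delta t))+LK\Delta t\,B$, the fact that the average of an integrable function with values in a closed convex set stays in that set (this is where convexity \emph{and} closedness of $F(x(n\Delta t))+LK\Delta t\,B$ are used), and the Lipschitz transfer to $F(\eta_n)$ give the recursion $e_{n+1}\le(1+L\Delta t)e_n+LK\Delta t^2$, $e_0=0$, whose solution satisfies $e_n\le K\Delta t\big((1+L\Delta t)^n-1\big)\le K\Delta t(e^{LT}-1)\le KLTe^{LT}\Delta t$; this is in fact slightly sharper than \eqref{eq:pathapprox}. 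In the converse direction the piecewise-linear interpolant has defect $\mathrm{dist}\big(y'(t),F(y(t))\big)\le LK\Delta t$ a.e., and Filippov's theorem (which needs only Lipschitz continuity and compact values, not convexity) yields the same bound; citing Filippov rather than reproving it is legitimate given that the paper outsources the entire theorem. Your closing remark is also the structurally important one: convexity enters only through the averaging step in the forward direction, which is precisely the direction that Theorem \ref{thm:convpaths1} of the paper re-establishes for nonconvex $F$ at the cost of dimension-dependent constants, while the converse direction is convexity-free. What the paper's citation buys is brevity; what your proof buys is self-containedness modulo one standard, quotable result.
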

The convergence of solution paths to non-convex differential
inclusions is given next.
\begin{thm}\label{thm:convpaths1}
Assume that $x:[0,T]\rightarrow\Re\sp d$ solves (1.1). 
Let $F$ be a function from $\Re^d$ into the non-empty compact  subsets of
$\Re^d$, which satisfies \eqref{eq:FLip} and \eqref{eq:FLip}.
Then there
exists a solution $\{\xi_n\}_{n=0}\sp N$ to \eqref{eq:FE}, such that 
\begin{equation}\label{eq:pathconv}
\max_{0\leq n\leq N}\abs{x(n\Delta t)-\xi_n} \leq K(e\sp{LT}d(d+1) +
2d+ LTe\sp{LT})\Delta t.
\end{equation}  
\end{thm}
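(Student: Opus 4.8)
The plan is to pass through the convexified inclusion: first replace $F$ by $\text{co}(F)$ and invoke Theorem~\ref{thm:ConvexPaths}, then ``de-convexify'' the resulting Euler trajectory back to a genuine solution of \eqref{eq:FE}, the latter step being the estimate of \cite{Sandberg-DI1} read along a single path rather than over the whole reachable set. Since $F(y)\subset\text{co}\big(F(y)\big)$, the given $x$ solves $x'(t)\in\text{co}\big(F(x(t))\big)$ as well. Convexification does not increase the Hausdorff distance, so $\text{co}(F)$ again satisfies \eqref{eq:Fbound} with the same $K$ and \eqref{eq:FLip} with the same $L$. Applying Theorem~\ref{thm:ConvexPaths} to $\text{co}(F)$ therefore produces a solution $\{\eta_n\}_{n=0}^N$ of the convexified scheme $\eta_{n+1}\in\psi(\eta_n)=\eta_n+\Delta t\,\text{co}\big(F(\eta_n)\big)$ with
\begin{equation*}
\max_{0\le n\le N}\abs{x(n\Delta t)-\eta_n}\le KLTe^{LT}\Delta t.
\end{equation*}

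The core of the proof is to build a solution $\{\xi_n\}_{n=0}^N$ of the non-convex scheme \eqref{eq:FE} that shadows $\{\eta_n\}$ with
\begin{equation*}
\max_{0\le n\le N}\abs{\eta_n-\xi_n}\le K\big(e^{LT}d(d+1)+2d\big)\Delta t.
\end{equation*}
I would construct $\{\xi_n\}$ inductively. At each step the convex increment satisfies $\eta_{n+1}-\eta_n=\Delta t\,v_n$ with $v_n\in\text{co}\big(F(\eta_n)\big)$, so by Carath\'eodory's theorem $v_n=\sum_{j=0}^{d}\lambda_{n,j}f_{n,j}$ is a convex combination of at most $d+1$ points $f_{n,j}\in F(\eta_n)$. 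Using \eqref{eq:FLip} each $f_{n,j}$ can be transported to a point of $F(\xi_n)$ at distance at most $L\abs{\eta_n-\xi_n}$, and I would take $\xi_{n+1}=\xi_n+\Delta t\,w_n$ with $w_n$ the transported image of a \emph{single} selected vertex $f_{n,j(n)}$. The index $j(n)$ is chosen by the balancing (selection) argument of \cite{Sandberg-DI1}, whose point is that the accumulated discrepancy $D_n$ between the chosen vertices $\{f_{k,j(k)}\}_{k<n}$ and the convex means $\{v_k\}_{k<n}$ can be confined, uniformly in $N$, to a fixed polytope of size $O\big(d(d+1)\big)$. Feeding this bound, together with the Lipschitz transport error, into a discrete Gronwall inequality produces the factor $e^{LT}$ and the displayed estimate for $\abs{\eta_n-\xi_n}$.

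Combining the two estimates by the triangle inequality gives
\begin{equation*}
\abs{x(n\Delta t)-\xi_n}\le KLTe^{LT}\Delta t+K\big(e^{LT}d(d+1)+2d\big)\Delta t=K\big(e^{LT}d(d+1)+2d+LTe^{LT}\big)\Delta t,
\end{equation*}
which is exactly \eqref{eq:pathconv}.

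The main obstacle is the second step, namely the \emph{uniform-in-$N$} confinement of the discrepancy. A naive greedy selection that merely reduces $\abs{D_n}^2$ at each step only yields a bound of order $\sqrt{N}$, and hence half-order convergence as in \cite{Grammel}; rate one forces a Steinitz/Carath\'eodory-type balancing that keeps $D_n$ inside an $N$-independent box whose diameter scales like $d(d+1)$. The delicate point is to carry out this balancing \emph{simultaneously} with the Lipschitz transport, so that replacing the vertices $f_{n,j}\in F(\eta_n)$ by genuine points of $F(\xi_n)$ does not destroy the confinement; this is precisely where the argument departs only slightly from the reachable-set proof of \cite{Sandberg-DI1}, since a single shadowing path already carries all the information that the reachable-set version kept track of.
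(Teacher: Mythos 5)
Your outer skeleton is exactly the paper's: convexify $F$ (noting, via Lemma 2.1 of \cite{Sandberg-DI1}, that $\text{co}(F)$ has the same Lipschitz constant), apply Theorem \ref{thm:ConvexPaths} to get a trajectory $\{\eta_n\}$ of $\eta_{n+1}\in\psi(\eta_n)$ with $\max_n\abs{x(n\Delta t)-\eta_n}\leq KLTe^{LT}\Delta t$, shadow $\{\eta_n\}$ by a trajectory of \eqref{eq:FE}, and finish with the triangle inequality. The gap is the shadowing step, which is the entire content of the theorem and which you do not prove: you delegate it to ``the balancing (selection) argument of \cite{Sandberg-DI1}'', i.e.\ to a lemma asserting that one can choose, one committed step at a time, a single Carath\'eodory vertex $f_{n,j(n)}\in F(\eta_n)$ of the convex increment $v_n$ so that the discrepancy $D_n=\sum_{k<n}(f_{k,j(k)}-v_k)$ stays in an $N$-independent region. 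No such selection lemma exists in \cite{Sandberg-DI1}, and the present paper does not argue this way either. What \cite{Sandberg-DI1} supplies, and what the paper's proof actually uses, is a set-valued inclusion with a built-in delay of $d$ steps:
\begin{equation*}
\psi\big(\psi^d(\xi_{n-d})+\varepsilon B\big)\subset\psi^d\big(\varphi(\xi_{n-d})\big)+\big(KLd(d+1)\Delta t^2+\varepsilon(1+L\Delta t)\big)B,
\end{equation*}
so that one commits a single non-convex step $\xi_{n-d+1}\in\varphi(\xi_{n-d})$ at a time while the most recent $d$ steps remain convex and uncommitted; the recursion $\varepsilon_{n+1}=(1+L\Delta t)\varepsilon_n+KLd(d+1)\Delta t^2$ gives $\varepsilon_n\leq Ke^{LT}d(d+1)\Delta t$, and the $d$-step lag itself costs the extra $2Kd\Delta t$ in the final constant. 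This buffer of uncommitted convex steps is precisely what replaces the bounded-discrepancy selection you invoke; the paper never selects vertices at all.

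Your route could in fact be completed, but by a tool you neither name nor prove: a Steinitz-type rounding theorem (B\'ar\'any--Grinberg) saying that if $A_k\subset KB$ are compact and $v_k\in\text{co}(A_k)$, with the \emph{whole sequence known in advance}, then one can choose $a_k\in A_k$ with all prefix sums $\abs{\sum_{k\leq n}(v_k-a_k)}$ bounded by a constant of order $dK$. Applied offline to the fixed sets $A_k=F(\eta_k)$ (the convex trajectory is determined before $\{\xi_n\}$ is built), followed by the Lipschitz transport $w_k\in F(\xi_k)$ with $\abs{w_k-f_{k,j(k)}}\leq L\abs{\xi_k-\eta_k}$ and a discrete Gronwall inequality, this yields $\abs{\xi_n-\eta_n}=O(dKe^{LT})\Delta t$ and hence the theorem. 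Note that this ordering dissolves the difficulty you flag about doing the balancing ``simultaneously'' with the transport: the transport involves no further choices, so the feedback through $\xi_k$ enters only as a Gronwall term. What cannot work is the online commitment implicit in your construction (choose $j(n)$, transport, step, repeat): bounded prefix discrepancy requires deferring up to $d$ decisions, and, as you yourself observe, a greedy online rule gives only $\sqrt N$. As written, the proposal reduces the theorem to exactly its hard core and then cites that core away to a reference that does not contain it, so it is not yet a proof.
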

\begin{proof}
Let $\{\eta_n\}_{n=0}\sp{N}$ be a solution to the scheme 
\begin{equation*}
\eta_{n+1}\in\psi(\eta_n),\quad \text{for }0\leq n\leq N-1,
\end{equation*}
which satisfies \eqref{eq:pathapprox}. By lemma 2.1 in
\cite{Sandberg-DI1} it follows that the set-valued function
$\text{co}\big(F(x)\big)$  is Lipschitz continuous in the Hausdorff
distance with the same constant as $F(x)$. Therefore Theorem
\ref{thm:ConvexPaths} guarantees the existence of such a solution
$\{\eta_n\}$. 

Let $\varepsilon$ be any positive number, $n$ an integer such that
 $d\leq n\leq N$, and  $\xi_{n-d}$  a point in $\Re\sp d$ such
that 
\begin{equation*}
\eta_n\in \psi\sp d(\xi_{n-d}) + \varepsilon B.
\end{equation*}
Similarly as in the proof of  Theorem 3.4 in \cite{Sandberg-DI1} we have the
following inclusion: 
\begin{equation*}
\psi\big(\psi\sp d(\xi_{n-d})+\varepsilon B\big) \subset \psi\sp
d\big(\varphi(\xi_{n-d})\big) +\big(KLd(d+1)\Delta t\sp 2
+\varepsilon(1+L\Delta t)\big)B
\end{equation*}
Therefore, there must exist an $\xi_{n-d+1}\in\varphi(\xi_{n-d})$, such
that 
\begin{equation*}
\eta_{n+1}\in \psi\sp
d(\xi_{n-d+1}) +\big(KLd(d+1)\Delta t\sp 2
+\varepsilon(1+L\Delta t)\big)B.
\end{equation*}
It follows that there exists a solution $\{\xi_n\}_{n=0}\sp{N-d}$ to
\begin{equation}\label{eq:phievol}
\xi_{n+1}\in\varphi(\xi_n),
\end{equation}
for $0\leq n\leq N-d-1$,
such that 
\begin{equation*}
\eta_{n+d}\in\psi\sp d(\xi_n) +\varepsilon_n B,
\end{equation*}
where 
\begin{equation*}
\begin{split}
\varepsilon_{n+1}&=(1+L\Delta t)\varepsilon_n + KLd(d+1)\Delta t\sp
2,\\
\varepsilon_0&=0.
\end{split}
\end{equation*}
By the proof of Theorem 3.5 in \cite{Sandberg-DI1} it holds that 
\begin{equation*}
\varepsilon_n\leq Ke\sp{LTn/N}d(d+1)\Delta t \leq
Ke\sp{LT}d(d+1)\Delta t.
\end{equation*}
Let us extend the solution $\{\xi_n\}$ up to $n=N$, by letting
$\{\xi_n\}_{n=N-d+1}\sp N$ be any solution to \eqref{eq:phievol} for
$N-d\leq n\leq N-1$. For $d\leq n\leq N$ we have
\begin{equation}\label{eq:xixbound}
\abs{\xi_n-\eta_n} \leq \abs{\xi_n-\xi_{n-d}}+\abs{\eta_n-\xi_{n_d}} \leq
Kd\Delta t +Kd\Delta t + Ke\sp{LT}d(d+1)\Delta t.
\end{equation}
For $0\leq n\leq d$ we have
\begin{equation*}
\abs{\xi_n-\eta_n}\leq \abs{\xi_n-x_0}+\abs{\eta_n-x_0}\leq 2Kd\Delta t.
\end{equation*}
Hence \eqref{eq:xixbound} holds for all $0\leq n\leq N$.
This together with \eqref{eq:pathapprox} gives \eqref{eq:pathconv} 
\end{proof}

Let us now consider the situation where the set-valued function $F$ is
given by 
\begin{equation}\label{eq:disccontrol1}
F(x)=\{f_i(x)\}_{i=1}\sp M,
\end{equation}
and where we have smoothness, in the sense that there exists a
constant $S>0$, such that
\begin{equation}\label{eq:disccontrol2}
\abs{f_i(x)-f_i(z)-f'_i(z)(x-z)}\leq S\abs{x-z}\sp 2,
\end{equation}
for all $1\leq i\leq M$ and $x,z\in\Re\sp d$. By \eqref{eq:Fbound} we
have
\begin{equation}\label{eq:disccontrol3}
\abs{f_i(x)}\leq K, \quad\text{for all $1\leq i\leq M$ and $x\in\Re\sp
  d$.}
\end{equation} 
Let us assume that we also have the following bound on the
Jacobians:
\begin{equation}\label{eq:disccontrol4}
\abs{f'_i(x)}\leq L,\quad\text{for every }x\in\Re\sp d.
\end{equation}
Under these assumptions we have
\begin{thm}\label{thm:fullydiscrete}
Assume that \eqref{eq:disccontrol1}, \eqref{eq:disccontrol2},
\eqref{eq:disccontrol3}, and \eqref{eq:disccontrol4} hold.
Assume that $M\geq d+1$, and that $x:[0,T]\rightarrow\Re\sp d$ solves \eqref{eq:DI}. Then
there exists a solution $\{\xi_n\}_{n=0}\sp{N}$ to \eqref{eq:FE}, such
that
\begin{multline}\label{eq:pathM}
\max_{0\leq n\leq N}\abs{x(n\Delta t)-\xi_n} \leq
\big(e\sp{LT}(KLT+K(8M-10))+2K(M-1)\big)\Delta t\\ 
+e\sp{LT}\big(KL(M-1)(M-2) + 2KL\frac{(M-1)\sp3-(M-1)}{3}(1+L\Delta
t)\sp{M-3}\\
+ 2SK\sp2\frac{M(M-1)(2M-1)}{3L}\big)\Delta t\sp2.
\end{multline} 
\end{thm}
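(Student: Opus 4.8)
The plan is to follow the architecture of the proof of Theorem~\ref{thm:convpaths1}, but to replace the dimension-driven estimate by one that exploits the finite structure $F(x)=\{f_i(x)\}_{i=1}^M$ together with the smoothness hypothesis \eqref{eq:disccontrol2}. The decisive point is that in Theorem~\ref{thm:convpaths1} the de-convexification contributes a per-step error of order $\Delta t^2$ whose coefficient is quadratic in the ambient dimension $d$, and this accumulates to the first-order term $Ke^{LT}d(d+1)\Delta t$. Here the second-order Taylor bound \eqref{eq:disccontrol2} is meant to let the leading part of that error be cancelled by using the Jacobians $f_i'$ to predict the displacement of the evaluation points, so that the de-convexification enters the final estimate only at order $\Delta t^2$, with coefficients that are polynomial in the number of functions $M$ rather than in $d$. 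The two results of Section~\ref{sec:FD} should furnish exactly this: a combinatorial de-convexification over a window of $M-1$ indices, and the Taylor-type estimate that produces the cancellation.

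First I would reproduce the convex approximation of Theorem~\ref{thm:convpaths1}. Since $\text{co}(F)$ is Lipschitz with constant $L$ (Lemma~2.1 of \cite{Sandberg-DI1}), Theorem~\ref{thm:ConvexPaths} yields a solution $\{\eta_n\}_{n=0}^N$ of the convex scheme $\eta_{n+1}\in\psi(\eta_n)$ with
\begin{equation*}
\max_{0\leq n\leq N}\abs{x(n\Delta t)-\eta_n}\leq KLTe^{LT}\Delta t,
\end{equation*}
which supplies the $KLTe^{LT}\Delta t$ contribution to \eqref{eq:pathM}. It remains to approximate $\{\eta_n\}$ by a solution of the genuine scheme \eqref{eq:FE}.

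The core of the argument is a sharpened de-convexification. In analogy with the inclusion used in Theorem~\ref{thm:convpaths1}, I would establish, for every $\xi$ and every $\varepsilon>0$, an inclusion of the form
\begin{equation*}
\psi\big(\psi^{M-1}(\xi)+\varepsilon B\big)\subset\psi^{M-1}\big(\varphi(\xi)\big)+\big(\rho\,\Delta t^3+\varepsilon(1+L\Delta t)\big)B,
\end{equation*}
where, in contrast to the generic case, the smoothness bound \eqref{eq:disccontrol2} removes the $\Delta t^2$ term and leaves only the $\Delta t^3$ coefficient $\rho$ gathering the window errors. Peeling off one genuine Euler step at a time then produces a solution $\{\xi_n\}$ of \eqref{eq:FE} with $\eta_{n+M-1}\in\psi^{M-1}(\xi_n)+\varepsilon_n B$, where $\varepsilon_{n+1}=(1+L\Delta t)\varepsilon_n+\rho\,\Delta t^3$ and $\varepsilon_0=0$. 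Summing this geometric recursion divides $\rho\,\Delta t^3$ by $L\Delta t$ and multiplies by $e^{LT}$, giving $\varepsilon_n\leq e^{LT}(\rho/L)\Delta t^2$, which reproduces the $e^{LT}$-weighted $\Delta t^2$ block of \eqref{eq:pathM}. The $M-1$ boundary indices at each end, where no full window is available, are controlled directly by the uniform bound \eqref{eq:disccontrol3}: each scheme step moves a point by at most $K\Delta t$, so there $\abs{\xi_n-\eta_n}\leq 2K(M-1)\Delta t$, which accounts for the $2K(M-1)\Delta t$ term, the residual first-order slack of the windowed estimate supplying the $K(8M-10)e^{LT}\Delta t$ term. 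Combining with the convex approximation gives \eqref{eq:pathM}.

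I expect the main obstacle to be the bookkeeping inside a single window of $M-1$ indices. At the $k$-th index of a window the evaluation points have drifted by $O(kK\Delta t)$, so the Jacobian bound \eqref{eq:disccontrol4} contributes a factor amplifying like $(1+L\Delta t)^{M-3}$ through the nested convex steps, while the smoothness remainder \eqref{eq:disccontrol2} contributes a term of size $S(kK\Delta t)^2$. Summing these over $k=1,\dots,M-1$ is what produces the three characteristic power sums $\sum k$, $\sum k(k+1)$, and $\sum k^2$, which up to elementary factors are the quantities $(M-1)(M-2)$, $\frac{(M-1)^3-(M-1)}{3}$, and $\frac{M(M-1)(2M-1)}{6}$ appearing in \eqref{eq:pathM}. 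Carrying the constants $K$, $L$, $S$ (and the stray factor $1/L$ on the $S$-term) correctly through each summation, verifying that the amplification exponent $M-3$ is the right one, and—most importantly—confirming that the smoothness really annihilates the would-be $\Delta t^2$ per-step error rather than merely reducing its constant, is the delicate part of the proof.
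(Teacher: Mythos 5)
Your proposal reproduces the paper's outer architecture correctly (convex approximation of $x$ by $\{\eta_n\}$ via Theorem~\ref{thm:ConvexPaths}, then a windowed de-convexification driven by a recursion $\varepsilon_{n+1}=(1+L\Delta t)\varepsilon_n+\text{per-step error}$, then boundary terms of size $2K(M-1)\Delta t$), but the central claim is wrong, and the error accounting built on it is internally inconsistent. The smoothness hypothesis \eqref{eq:disccontrol2} does \emph{not} annihilate the per-step $\Delta t^2$ de-convexification error. In the paper the window object is $\text{co}\big(\varphi^{M-1}(\xi)\big)$ (convex hull of iterates of the genuine map), not $\psi^{M-1}(\xi)$, and the peeling-off step, Theorem~\ref{thm:coco}, reads
\begin{equation*}
\text{co}\big(\varphi^{M}(\xi)\big)\subset\bigcup_{x\in\varphi(\xi)}\text{co}\big(\varphi^{M-1}(x)\big)+\Big((8M-10)KL\Delta t^2+O(\Delta t^3)\Big)B ,
\end{equation*}
where the $(8M-10)KL\Delta t^2$ term is produced by the combinatorial index-swapping (Carath\'eodory decomposition plus exchanging which step uses $f_1$), and it survives even for exactly linear $f_i$; smoothness only controls the $O(\Delta t^3)$ pieces (the linearization cost and the convex-hull identity of Theorem~\ref{thm:psiconvexhull}). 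It is precisely the accumulation of this $\Delta t^2$ per-step term, via the geometric sum $\sum(1+L\Delta t)^k\le e^{LT}/(L\Delta t)$, that produces the first-order term $e^{LT}K(8M-10)\Delta t$ in \eqref{eq:pathM}. In your scheme, with a purely $\rho\,\Delta t^3$ per-step error and $\varepsilon_0=0$, the accumulated error is $O(\Delta t^2)$ and neither the $e^{LT}K(8M-10)\Delta t$ term nor the $e^{LT}KL(M-1)(M-2)\Delta t^2$ term has any source; your attribution of the former to ``residual first-order slack of the windowed estimate'' names no mechanism. (In the paper the latter term is the \emph{initial} window error $\varepsilon_0=KL(M-1)(M-2)\Delta t^2$, the cost of placing $\eta_{M-1}\in\psi^{M-1}(x_0)$ inside $\text{co}\big(\varphi^{M-1}(x_0)\big)+\varepsilon_0B$, which you also miss by taking $\varepsilon_0=0$.)

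The choice of window object is not cosmetic. With your $\psi^{M-1}$ windows the needed inclusion is essentially $\psi^{M-1}\big(\psi(\xi)\big)\subset\psi^{M-1}\big(\varphi(\xi)\big)+O(\Delta t^3)B$, and there is no visible mechanism for this: a point of $\psi(\xi)$ is a convex combination lying at distance of order $K\Delta t$ from every point of $\varphi(\xi)$, and $\psi^{M-1}$ is merely Lipschitz, so the naive estimate gives $O(\Delta t)$, not $O(\Delta t^3)$ or even $O(\Delta t^2)$. The paper's two lemmas are engineered exactly to avoid this: Theorem~\ref{thm:psiconvexhull} shows, by maximizing linear functionals over the linearized dynamics, that one convexified step maps $\text{co}\big(\varphi^{M-1}(z)\big)$ into $\text{co}\big(\varphi^{M}(z)\big)$ up to $SK^2\frac{M(M-1)(2M-1)}{3}\Delta t^3$, and Theorem~\ref{thm:coco} then extracts a genuine first Euler step at the stated $O(\Delta t^2)$ cost. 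So the correct statement of the improvement over Theorem~\ref{thm:convpaths1} is that the coefficient of the unavoidable first-order de-convexification error is reduced from $Kd(d+1)$ to $K(8M-10)$ --- not that the error drops to second order. As written, your proof cannot yield the bound \eqref{eq:pathM}, and the inclusion it rests on would need a proof that does not exist in the paper and is very unlikely to hold.
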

\begin{proof}
This proof follows the same basic lines as the proof of Theorem
\ref{thm:convpaths1}. Hence we assume that we have a solution
$\{\eta_n\}_{n=0}\sp N$ to the scheme
\begin{equation*}
\eta_{n+1}\in\psi(\eta_n),\quad\text{for }0\leq n\leq N-1,
\end{equation*}
which satisfies \eqref{eq:pathapprox}. Let $\varepsilon$ be any
positive number, $n$ an integer such that $M-1\leq n\leq N$, and
$\xi_{n-M+1}$ a point in $\Re\sp d$ such that  
\begin{equation*}
\eta_n \in \text{co}\big(\varphi\sp{M-1}(\xi_{n-M+1})\big)+\varepsilon B.
\end{equation*} 
Similarly as in the proof of Theorem 3.4 in \cite{Sandberg-DI1} we
have the following inclusion:
\begin{equation*}
\psi\big(\text{co}(\varphi\sp{M-1}(\xi_{n-M+1}))+\varepsilon B\big)
\subset
\psi\big(\text{co}(\varphi\sp{M-1}(\xi_{n-M+1}))\big)+\varepsilon(1+L\Delta t)B.
\end{equation*}
By Theorems \ref{thm:psiconvexhull} and \ref{thm:coco} we have
\begin{multline*}
\psi\big(\text{co}(\varphi\sp{M-1}(\xi_{n-M+1}))\big)+\varepsilon(1+L\Delta
t)B \subset 
\bigcup_{x\in\varphi(\xi_{n-M+1})} \text{co}\big(\varphi\sp{M-1}(x)\big)\\ 
+\big(\varepsilon(1+L\Delta t) +(8M-10)KL\Delta t\sp 2\\ 
+(2KL\sp2\frac{(M-1)\sp3-(M-1)}{3}(1+L\Delta t)\sp{M-3}\\ 
+2SK\sp 2
\frac{M(M-1)(2M-1)}{3})\Delta t\sp3\big)B=:\big(\varepsilon(1+L\Delta
t) + C_1\Delta t\sp2 + C_2\Delta t\sp 3\big)B.
\end{multline*}
Therefore, there must exist a $\xi_{n-M+2}\in\varphi(\xi_{n-M+1})$, such
that 
\begin{equation*}
\eta_{n+1}\in \text{co}\big(\varphi\sp{M-1}(\xi_{n-M+2})\big) +
\big(\varepsilon(1+L\Delta t) +C_1\Delta t\sp 2 + C_2\Delta t\sp 3\big)B.
\end{equation*}
Hence there exists a solution $\{\xi_n\}_{n=0}\sp{N-M+1}$ to 
\begin{equation*}
\begin{split}
\xi_{n+1}&\in\varphi(\xi_n),\\
\xi_0=x_0,
\end{split}
\end{equation*} 
for $0\leq n\leq N-M$, such that
\begin{equation*}
\eta_{n+M-1}\in \text{co}\big(\varphi\sp{M-1}(\xi_n)\big) +
\varepsilon_n B,
\end{equation*}
where
\begin{equation*}
\begin{split}
\varepsilon_{n+1}&=(1+L\Delta t)\varepsilon_n+ C_1\Delta t\sp
2+C_2\Delta t\sp 3,\\
\varepsilon_0&=KL(M-1)(M-2)\Delta t\sp2.
\end{split}
\end{equation*}
From this we have that 
\begin{equation*}
\epsilon_n = (1+L\Delta t)\sp{n+1}\varepsilon_0 + (C_1\Delta
t\sp2+C_2\Delta t\sp3)\big(1+(1+L\Delta t)+\cdots+(1+L\Delta t)\sp n\big).
\end{equation*}
For $0\leq n\leq N-M$ we have $(1+L\Delta t)\sp n\leq e\sp{LT}$ and 
\begin{equation*}
1+(1+L\Delta t)+\cdots+(1+L\Delta t)\sp n \leq
\frac{e\sp{LT}-1}{L\Delta t}\leq\frac{e\sp{LT}}{L\Delta t}.
\end{equation*}
Hence
\begin{equation*}
\varepsilon_n \leq KLe\sp{LT}(M-1)(M-2)\Delta
t\sp2+\frac{e\sp{LT}}{L}(C_1\Delta t+C_2\Delta t\sp2).
\end{equation*}
As in the proof of Theorem \ref{thm:convpaths1} we can extend
$\{\xi_n\}$ up to $N$ and have 
\begin{equation*}
\abs{\xi_n-\eta_n}\leq KLe\sp{LT}(M-1)(M-2)\Delta
t\sp2+\frac{e\sp{LT}}{L}(C_1\Delta t+C_2\Delta t\sp2) + 2K(M-1)\Delta t.
\end{equation*} 
This together with \eqref{eq:pathapprox} gives us \eqref{eq:pathM}. 
\end{proof}

\section{The fully discrete case}\label{sec:FD}
We present here two results, Theorems \ref{thm:coco} and
\ref{thm:psiconvexhull}, that are useful for the proof of Theorem
\ref{thm:fullydiscrete}. We will use the following well-known result,  
the Carath\'eodory Theorem:
\begin{thm}\label{thm:Caratheodory}
The convex hull of an arbitrary subset $A$ of $\Re^d$ is given by
\begin{equation*}
\text{co}(A)=\Big\{\sum_{i=1}^{d+1}\lambda_i a_i\ |\ a_i\in A,
\lambda_i \geq 0, \sum_{i=1}^{d+1} \lambda_i=1\Big\}.
\end{equation*}
\end{thm}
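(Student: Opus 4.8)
The plan is to prove the two inclusions separately, disposing of the easy one by inspection and establishing the substantive one by a dimension-counting reduction argument. Write $S$ for the set on the right-hand side, namely all combinations $\sum_{i=1}^{d+1}\lambda_i a_i$ with $a_i\in A$, $\lambda_i\ge 0$, and $\sum_{i=1}^{d+1}\lambda_i=1$. The inclusion $S\subset\text{co}(A)$ is immediate, since each such combination is a convex combination of points of $A$ and $\text{co}(A)$ is by definition closed under convex combinations. The real work lies in the reverse inclusion $\text{co}(A)\subset S$.

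For that direction I would begin from the standard characterization that every point of $\text{co}(A)$ is a finite convex combination $x=\sum_{i=1}^{k}\lambda_i a_i$ with $a_i\in A$, $\lambda_i>0$, and $\sum_{i=1}^k\lambda_i=1$ (discarding any terms with zero weight). If $k\le d+1$ we are done after padding with zero-weight terms to reach exactly $d+1$ indices, so assume $k>d+1$ and show that the number of points can always be reduced by one. The key observation is that the $k-1$ difference vectors $a_2-a_1,\dots,a_k-a_1$ lie in $\Re^d$ and number more than $d$, hence are linearly dependent: there exist scalars $\mu_2,\dots,\mu_k$, not all zero, with $\sum_{i=2}^k\mu_i(a_i-a_1)=0$. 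Setting $\mu_1=-\sum_{i=2}^k\mu_i$ produces coefficients $\mu_1,\dots,\mu_k$, not all zero, satisfying the affine dependence $\sum_{i=1}^k\mu_i a_i=0$ together with $\sum_{i=1}^k\mu_i=0$.

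The reduction then proceeds by perturbing the weights along this dependence. For a real parameter $t$ we have $x=\sum_{i=1}^k(\lambda_i-t\mu_i)a_i$, with coefficients still summing to one. Because $\sum_i\mu_i=0$ while the $\mu_i$ are not all zero, at least one $\mu_i$ is strictly positive; I would take $t=\min\{\lambda_i/\mu_i : \mu_i>0\}$, the largest value for which every coefficient $\lambda_i-t\mu_i$ remains nonnegative. At this $t$ all coefficients are nonnegative, they sum to one, and at least one of them vanishes, so $x$ is exhibited as a convex combination of at most $k-1$ points of $A$. Iterating this reduction until $k\le d+1$ then yields $x\in S$.

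The only genuine obstacle is this reduction step, and specifically checking that the chosen $t$ keeps all weights nonnegative while forcing at least one to zero. This is exactly where the affine constraint $\sum_i\mu_i=0$ is essential: it guarantees the existence of a positive $\mu_i$, so that the minimum defining $t$ is taken over a nonempty set and $t$ is finite and strictly positive, and it ensures that the perturbed weights still sum to one. Everything else is routine bookkeeping, and the downward induction on $k$ terminates after finitely many steps.
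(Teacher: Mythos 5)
Your proof is correct. Note that the paper itself offers no argument for this theorem: it simply refers the reader to \cite{Berger}, so there is no internal proof to compare yours against. What you have written is the classical proof of Carath\'eodory's theorem --- reduce a convex combination of $k>d+1$ points along an affine dependence $\sum_i \mu_i a_i=0$, $\sum_i\mu_i=0$, choosing $t=\min\{\lambda_i/\mu_i : \mu_i>0\}$ so that all weights stay nonnegative, the weights still sum to one, and at least one weight is driven to zero --- and all the delicate points are handled properly: the existence of a strictly positive $\mu_i$ follows from $\sum_i\mu_i=0$ with the $\mu_i$ not all zero, the positivity of $t$ follows from having discarded zero-weight terms, and the weights with $\mu_i\le 0$ can only increase. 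The one ingredient you invoke without proof is the standard fact that $\text{co}(A)$ equals the set of \emph{all} finite convex combinations of points of $A$; this is a routine consequence of the definition (that set is convex, contains $A$, and lies in every convex set containing $A$) and is acceptable to cite, though a fully self-contained write-up would include it. In effect your proposal supplies exactly the argument the paper delegates to the reference.
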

For a proof, see \cite{Berger}.

\begin{thm}\label{thm:coco}
Assume that \eqref{eq:disccontrol1}, \eqref{eq:disccontrol2},
\eqref{eq:disccontrol3}, and \eqref{eq:disccontrol4} hold, and that
$M \geq d+1$. Then
\begin{equation}\label{eq:convincl}
\text{co}\big(\varphi\sp{M}(x_0)\big)\subset\cup_{x\in\varphi(x_0)}\text{co}\big(\varphi\sp{M-1}(x)\big)+RB,
\end{equation}
where
\begin{multline*}
R=(8M-10)KL\Delta t\sp2 +
\big(2KL\sp2\frac{(M-1)\sp3-(M-1)}{3}(1+L\Delta t)\sp{M-3}\\ 
+SK\sp2\frac{M(M-1)(2M-1)}{3}\big)\Delta t\sp3.
\end{multline*}
\end{thm}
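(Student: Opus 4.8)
The plan is to verify the inclusion pointwise. Fix an arbitrary $p\in\text{co}\big(\varphi^M(x_0)\big)$ and produce an index $\bar i$ with $p\in\text{co}\big(\varphi^{M-1}(x^{(\bar i)})\big)+RB$, where I write $x^{(i)}:=x_0+\Delta t f_i(x_0)$ for the $i$-th possible first step. First I would apply Carath\'eodory's Theorem \ref{thm:Caratheodory} to write $p=\sum_{k=1}^{d+1}\lambda_k a_k$ with $\lambda_k\ge 0$, $\sum_k\lambda_k=1$, and each $a_k\in\varphi^M(x_0)$. Since $\varphi^M(x_0)=\bigcup_{x\in\varphi(x_0)}\varphi^{M-1}(x)$, each $a_k$ is the endpoint of a definite $M$-step Euler path from $x_0$, encoded by a sequence of indices $(c_1^k,\dots,c_M^k)\in\{1,\dots,M\}^M$, and in particular it has a well-defined first step $c_1^k$.

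The conceptual heart of the argument is a first-order analysis of these endpoints. Expanding one Euler step with \eqref{eq:disccontrol2} and \eqref{eq:disccontrol4}, the endpoint of a path $(c_1,\dots,c_M)$ equals $x_0+\Delta t\sum_{m=1}^M f_{c_m}(x_0)$ up to an $O(\Delta t^2)$ correction; thus to leading order the endpoint depends only on the multiset of velocities $\{f_{c_m}(x_0)\}_m$, not on their order. Consequently $p=x_0+\Delta t\,M w+O(\Delta t^2)$, where $w:=\frac1M\sum_k\lambda_k\sum_m f_{c_m^k}(x_0)=\sum_{l=1}^M\mu_l f_l(x_0)$ with weights $\mu_l:=\frac1M\sum_k\lambda_k\,\#\{m:c_m^k=l\}$ that are nonnegative and sum to one over the $M$ indices. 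By the pigeonhole principle some $\mu_{\bar i}\ge 1/M$, and a short computation then shows that the coefficients $\frac{M\mu_l-[l=\bar i]}{M-1}$ are nonnegative and sum to one, i.e. $\frac{Mw-f_{\bar i}(x_0)}{M-1}\in\text{co}\{f_l(x_0)\}$. Since the first-order image of $\text{co}\big(\varphi^{M-1}(x^{(\bar i)})\big)$ is $x_0+\Delta t\big(f_{\bar i}(x_0)+(M-1)\,\text{co}\{f_l(x_0)\}\big)$, this says precisely that the first-order content of $p$ is realized by paths beginning with the step $\bar i$, so the a priori $O(\Delta t)$ mismatch between the differing first steps cancels exactly. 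The standing hypothesis $M\ge d+1$ is what makes the $(d+1)$-term Carath\'eodory representation compatible with the $M$-symbol alphabet, leaving room to realize this common first step in the next step.

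With $\bar i$ fixed I would next realize the first-order matching by genuine elements of $\varphi^{M-1}(x^{(\bar i)})$: reshuffle the function choices across the convex combination so that each contributing path begins with the step $\bar i$ while preserving, to first order, its velocity multiset, obtaining $\sum_j\nu_j b_j\in\text{co}\big(\varphi^{M-1}(x^{(\bar i)})\big)$. It then remains to bound $\abs{p-\sum_j\nu_j b_j}$ by $R$, which is the purely quantitative part. The error splits into three sources, each summed over the steps. First, transposing two adjacent Euler steps changes the endpoint by the discrete commutator $\big(f_a'f_b-f_b'f_a\big)(x_0)\,\Delta t^2+O(S\Delta t^3)$, of size at most $2KL\Delta t^2$ by \eqref{eq:disccontrol3}--\eqref{eq:disccontrol4}; accumulating these reorderings produces the leading $(8M-10)KL\Delta t^2$ term. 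Second, propagating a commutator error introduced near position $k$ through the remaining steps costs a Lipschitz amplification bounded by $(1+L\Delta t)^{M-3}$ together with a Jacobian-variation factor of order $L\Delta t$ per step, and summing the position-dependent contributions yields the $2KL^2\frac{(M-1)^3-(M-1)}{3}(1+L\Delta t)^{M-3}\Delta t^3$ term. Third, the genuine second-order Taylor remainders controlled by $S$ in \eqref{eq:disccontrol2}, of size $\le S\Delta t\,\abs{x-z}^2$ where the displacement $\abs{x-z}$ accumulated by moving a symbol across $\sim k$ positions is $\sim kK\Delta t$, contribute $SK^2k^2\Delta t^3$ at step $k$ and hence, via $\sum_{k=1}^{M-1}k^2=\frac{M(M-1)(2M-1)}{6}$, the $SK^2\frac{M(M-1)(2M-1)}{3}\Delta t^3$ term. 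Collecting the three gives $R$.

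The main obstacle is this third, bookkeeping step rather than the idea: one must pin down an explicit reshuffling that both lands genuinely in $\varphi^{M-1}(x^{(\bar i)})$ and inherits the exact first-order cancellation, and then track how each reordering error is amplified as it is carried through the subsequent $O(M)$ steps. It is precisely this position-dependent amplification---an error introduced at step $k$ being stretched by a further factor $(1+L\Delta t)$ once per remaining step, while the displacement it must absorb itself grows like $k$---that upgrades the naive linear-in-$M$ count of transpositions into the cubic factor $\frac{(M-1)^3-(M-1)}{3}$ and the sum-of-squares $\frac{M(M-1)(2M-1)}{3}$ appearing in $R$. The estimates must therefore be carried out with the step index retained throughout, rather than with uniform per-step bounds.
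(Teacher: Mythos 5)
Your skeleton matches the paper's: Carath\'eodory to get a $(d+1)$-term combination, the observation that to first order an Euler endpoint depends only on the multiset of velocities, and a pigeonhole argument producing an index $\bar i$ whose total weight is at least $1/M$ (equivalently $\gamma_{\bar i}\geq 1$ in the paper's normalization). But there is a genuine gap exactly where you yourself point: the ``reshuffling'' of function choices so that every contributing path starts with $\bar i$ is asserted, not constructed, and this is not bookkeeping --- it is the theorem. The danger is quantitative: changing a single symbol of a single path moves its endpoint by $\Delta t\,\abs{b_{\bar i}-b_c}\sim 2K\Delta t$ at \emph{first} order, which dwarfs $R$; so the reshuffle cannot act path-by-path. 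The paper's proof (Steps 2--3) resolves this by pairing a path that never uses symbol $\bar i$ with a path that uses it $N\geq 2$ times, redistributing their weights in the exact ratio $(1-\tfrac1N):\tfrac1N$ so that the first-order changes cancel identically, running a case analysis ($k>1$ versus $k\leq 1$) to keep all weights nonnegative, and --- crucially --- arranging the iteration so that the total weight of modified points never exceeds one. That last invariant is what keeps the accumulated second-order error at $O(M)KL\Delta t^2$ rather than the naive $O(M^2)KL\Delta t^2$ (each path's own second-order part consists of $\binom{M}{2}$ commutator-type terms, so matching only first order and bounding second order crudely also fails). Your proposal contains none of this pairing/weight-transfer mechanism, so as written the proof does not close.

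Two smaller points. First, the cubic terms do not arise the way you describe: in the paper the $(1+L\Delta t)^{M-3}$ factor is the Taylor/binomial tail bounding \emph{all} terms of order $\Delta t^3$ and higher in the exact expansion of a linearized Euler path (Step 1), incurred once in the symbol-exchange step and once in the commutator step, not a ``Lipschitz amplification of propagated commutator errors.'' Second, your $S$-term accounting gives $SK^2\sum_{k=1}^{M-1}k^2\Delta t^3=SK^2\tfrac{M(M-1)(2M-1)}{6}\Delta t^3$, which is half the stated constant; the paper gets the factor $2$ because the linearization error is paid twice, once for $\text{co}\big(\varphi^{M}(x_0)\big)$ and once for $\cup_{x\in\varphi(x_0)}\text{co}\big(\varphi^{M-1}(x)\big)$ (Step 5). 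These could be repaired, but only after the central weight-transfer construction is supplied.
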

\begin{proof}
We start by introducing the notation
\begin{equation*}
b_i=f_i(x_0),\ A_i=f'_i(x_0),\quad \text{for }0\leq i\leq M.
\end{equation*}
To begin with, we will make the assumption that the functions $f_i$
are given by
\begin{equation}\label{eq:linearized}
f_i(x)=b_i+A_i(x-x_0).
\end{equation}
Afterwards, we will consider the general case.
For simplicity, we will prove \eqref{eq:convincl} for the case where
$M=d+1$. The general result follows directly from this. We will also
assume that $x_0=0$, to simplify the presentation.

\emph{Step 1.} 
Every point $x$ in $\varphi\sp{d+1}(x_0)$ is given by
\begin{equation}\label{eq:pointinphidplusone}
x=x_0+\Delta t f_{i_1}(x_0)+\Delta t f_{i_2}(x_1)+\cdots+\Delta t
f_{i_{d+1}}(x_d), 
\end{equation}
where $i_j\in\{1,\ldots,d+1\}$, for all $1\leq j\leq d+1$, and $x_1$,
$x_2$,..., are defined recursively by
\begin{equation*}
x_{n+1}=x_n+\Delta t f_{i_n}(x_n), \quad \text{for } n=0,\ldots,d.
\end{equation*} 
Since we now assume that the functions $f_i$ are given by \eqref{eq:linearized},
we have
\begin{equation}\label{eq:discevolonestep}
x_{n+1}=x_n+\Delta t b_{i_n}+\Delta t A_{i_n}x_n.
\end{equation}
When we sum the terms in \eqref{eq:pointinphidplusone} under the
consideration of \eqref{eq:discevolonestep}, we see that
\begin{equation*}
x=x_0+\sum_{r=1}\sp{d+1} \sum_{1\leq
  k_1<k_2<\ldots<k_r\leq d+1} A_{i_{k_r}}A_{i_{k_{r-1}}}\cdots
  A_{i_{k_2}}b_{i_{k_1}}\Delta t\sp r.
\end{equation*} 
Let $\tilde x$ be the approximation of $x$, where all terms of power
three or larger in $\Delta t$ have been dropped, i.e.
\begin{equation*}
\tilde x=x_0+\sum_{r=1}\sp{2} \sum_{1\leq
  k_1<k_2<\ldots<k_r\leq d+1} A_{i_{k_r}}A_{i_{k_{r-1}}}\cdots
  A_{i_{k_2}}b_{i_{k_1}}\Delta t\sp r.
\end{equation*} 
With the bounds on $A_i$ and $b_i$ from \eqref{eq:disccontrol3}and
\eqref{eq:disccontrol4},  we have that 
\begin{multline*}
\abs{\tilde x-x}\leq \sum_{r=3}\sp{d+1} {d+1 \choose r}
L\sp{r-1}K\Delta t\sp r\\
=\frac{K}{L}\sum_{r=0}\sp{d+1}{d+1 \choose r}
(L \Delta t)\sp r - \frac{K}{L}\big(1+(d+1)L\Delta t+ \frac{d(d+1)}{2}L\sp 2 \Delta
t\sp 2\big)\\
=\frac{K}{L}(1+L\Delta t)\sp{d+1}- \frac{K}{L}\big(1+(d+1)L\Delta t+ \frac{d(d+1)}{2}L\sp 2 \Delta
t\sp 2\big).
\end{multline*} 
With a Taylor expansion of the function $f(x)=(1+x)\sp{d+1}$ around
$x=0$, we establish that 
\begin{equation*}
(1+x)\sp{d+1}\leq 1+(d+1)x +
  \frac{d(d+1)}{2}x\sp2+\frac{d\sp3-d}{6}(1+x)\sp{d-2}x\sp 3,
\end{equation*}
and hence
\begin{equation}\label{eq:higherorderbound}
\abs{\tilde x-x}\leq KL\sp2\frac{d\sp3-d}{6}(1+L\Delta t)\sp{d-2}\Delta
t\sp 3.
\end{equation}

\emph{Step 2.} We now consider the convex combination
\begin{equation}\label{eq:twoconvcomb}
(1-\frac{1}{N})x\sp1+\frac{1}{N}x\sp2,
\end{equation}
where $x\sp1$ and $x\sp2$ are two elements in $\varphi\sp{d+1}(x_0)$,
such that in the expression in \eqref{eq:pointinphidplusone} for
$x\sp1$, none of the indices $i_n$ equals one, while for $x\sp2$, $N$
of the indices equal one.   We will see how well the convex combination
in \eqref{eq:twoconvcomb} can be represented by another convex
combination,
\begin{equation}\label{eq:twoconvcomb2}
(1-\frac{1}{N})\tilde x\sp1+\frac{1}{N}\tilde x\sp2,
\end{equation}
where $\tilde x\sp1$ and $\tilde x\sp2$ both have precisely one index
$i_n$ equal to one in the expression in
\eqref{eq:pointinphidplusone}. 

Pick any $n\in\{1,2,\ldots,d+1\}$. Assume that we define $\tilde
x\sp1$ by changing the index $i_n$ (denote $i_n=k$) in the expression
\eqref{eq:pointinphidplusone} for $x\sp1$ to one. Then 
\begin{multline}\label{eq:x1diff}
\tilde x\sp1-x\sp1 = \Delta t(b_1-b_{i_n}) + \Delta
t\sp2\big((A_1-A_{i_n})(b_{i_1}+b_{i_2}+\cdots+b_{i_{n-1}})\\
+(A_{i_{n+1}}+\cdots +A_{i_{d+1}})(b_1-b_{i_n}) \big) + \ \text{higher
order terms.}
\end{multline} 
Simiarly, we define $\tilde x\sp2$ by exchanging all the indices 
for which $i_n=1$ to $i_n=k$. Then the first order term in \eqref{eq:twoconvcomb2}
is the same as in \eqref{eq:twoconvcomb}. The second order term in
the difference $\tilde x\sp 1-x\sp1$ in \eqref{eq:x1diff} is bounded
in magnitude by $2KLd\Delta t\sp 2$. 
Since this bound holds independently of which of the indices was
changed, it follows that the second order term in the difference
$\tilde x\sp 2-x\sp2$ is bounded in magnitude by $2NKLd\Delta
t\sp2$. Hence the difference in the second order term between the
convex combinations in \eqref{eq:twoconvcomb}  and in
\eqref{eq:twoconvcomb2} is bounded by 
\begin{equation*}
(1-\frac{1}{N})2KLd\Delta t\sp2+\frac{1}{N}2NKLd\Delta
  t\sp2=(4-\frac{2}{N})KLd\Delta t\sp2 \leq 4KLd\Delta t\sp2.
\end{equation*}
In step 1, we established that the sum of all terms of order higher
than or equal to three in $\Delta t$ for every element in
$\varphi\sp{d+1}(x_0)$ is bounded as in
\eqref{eq:higherorderbound}. We thereby have
\begin{equation}\label{eq:convcomberror} 
\big|(1-\frac{1}{N})(\tilde x\sp1-x\sp1)+\frac{1}{N}(\tilde x\sp
  2-x\sp2)\big| \leq 4KLd\Delta t\sp 2 + KL\sp2\frac{d\sp3-d}{3}(1+L\Delta
  t)\sp{d-2}\Delta t\sp3. 
\end{equation}

\emph{Step 3.} Let $z$ be any element in
$\text{co}\big(\varphi\sp{d+1}(x_0)\big)$. By the Carath\'eodory
Theorem (Theorem \ref{thm:Caratheodory}), we have that there exists a
$G\leq d+1$ and points and constants $x\sp i\in\varphi\sp{d+1}(x_0)$ and
$\alpha_i> 0$, for $1\leq i\leq G$, such that 
\begin{equation}\label{eq:zconvcomb}
z=\sum_{i=1}\sp{G}\alpha_i x\sp i, 
\end{equation} 
and 
$\sum_{i=1}\sp{G} \alpha_i=1$. 
We can then write
\begin{equation*}
z=x_0+\Delta t\sum_{i=1}\sp{d+1}\gamma_ib_i+\ \text{higher
order terms,} 
\end{equation*}
where $\gamma_i\geq 0$ for $1\leq i\leq d+1$ and
$\sum_{i=1}\sp{d+1}\gamma_i=d+1$. It must hold that at least one of
the coefficients $\gamma_i\geq 1$. For simplicity, let us assume that
$\gamma_1\geq 1$.
We will now present an algorithm
which gives us an approximation of $z$ in the form of a convex
combination of points in $\varphi\sp{d+1}(x_0)$ which all have one
index $i_n=1$ in the formula \eqref{eq:pointinphidplusone}. We also
give an error bound of this approximation. 

Let $I\subset\{1,2,\ldots,G\}$ be the index set of all the points
$x\sp i$ in \eqref{eq:zconvcomb} for which none of the indices $i_1$
in the formula for the points in $\varphi\sp{d+1}(x_0)$ equals
one. Let $J$ be a set which consists of the weights $\alpha_i$
corresponding to elements in $I$, i.e.\  $i\in I$ if and only if
$\alpha_i\in J$.

If $I=\emptyset$, we already have what we are aiming for. Let us
therefore assume that $I$ is nonempty, and for simplicity that $i=1$
is one of the elements therein. Take one element in
$\{1,2,\ldots,d+1\}\setminus I$, such that the corresponding element
$x\sp i$ in the convex combination \eqref{eq:zconvcomb} is of the form 
\begin{equation*}
x\sp i = x_0+\Delta t(Nb_1+\cdots)+\ \text{higher order terms,}
\end{equation*}
with $N\geq 2$. Such an element must exist, since $\gamma_1\geq
1$. For simplicity, let us assume that $x\sp2$ is one such
element. 
We may write
\begin{equation*}
\frac{1}{\alpha_1+\alpha_2}(\alpha_1 x\sp1 + \alpha_2 x\sp 2) = x_0 +
\Delta t(kb_1 +\cdots) +\ \text{higher order terms.}
\end{equation*}
One of the two following cases must hold:
\begin{enumerate}
\item $k > 1$. When this is the case we rewrite as follows:
\begin{multline}\label{eq:convcombrewritten}
\alpha_1 x\sp1+\alpha_2 x\sp2=\alpha_1 x\sp 1 +
\frac{\alpha_1}{N-1}x\sp2 +
\big(\alpha_2-\frac{\alpha_1}{N-1}\big)x\sp2\\
=\frac{N}{N-1}\alpha_1\big(\big(1-\frac{1}{N}\big)x\sp1+\frac{1}{N}x\sp
2\big)+\big(\alpha_2-\frac{\alpha_1}{N-1}\big)x\sp2.
\end{multline}
Since $k>1$ we have that $\alpha_2-\alpha_1/(N-1)$ is positive.
By the result from step 2, we have the approximation result in
\eqref{eq:convcomberror}, with some points $\tilde x\sp1$ and $\tilde
x\sp2$, both being of the form
\begin{equation*}
x_0+\Delta t(b_1+\cdots)+\ \text{higher order terms}.
\end{equation*}
Together with \eqref{eq:convcombrewritten}, this implies that
\begin{equation}\label{eq:kappacomb}
\sum_{i=1}\sp G \alpha_i x\sp i = \alpha_1\tilde x\sp1 +
\frac{\alpha_1}{N-1}\tilde x\sp2 +
\big(\alpha_2-\frac{\alpha_1}{N-1}\big)x\sp2 + \sum_{i=3}\sp G
\alpha_i x\sp i + \kappa,
\end{equation}
where
\begin{equation}\label{eq:kappabound}
\abs{\kappa}\leq \frac{N}{N-1}\alpha_1\big(4KL d\Delta t\sp 2 + KL\sp2\frac{d\sp3-d}{3}(1+L\Delta
  t)\sp{d-2}\Delta t\sp3\big).
\end{equation}

\item $k\leq 1$. In this case we rewrite as follows:
\begin{multline*}
\alpha_1 x\sp 1+\alpha_2 x\sp 2 = (N-1)\alpha_2 x\sp 1 + \alpha_2
x\sp2 + \big(\alpha_1-(N-1)\alpha_2\big)x\sp1\\
=N\alpha_2\big(\big(1-\frac{1}{N}\big)x\sp1+\frac{1}{N}x\sp
2\big) + \big(\alpha_1-(N-1)\alpha_2\big)x\sp1.
\end{multline*}
Since $k\leq 1$ we have that $\alpha_1-(N-1)\alpha_2$ is
nonnegative. Similarly as in case (1), we have 
\begin{equation}\label{eq:kappacomb2}
\sum_{i=1}\sp{G}\alpha_i x\sp i = (N-1)\alpha_2\tilde x\sp1 +
\alpha_2\tilde x\sp 2 + \big(\alpha_1-(N-1)\alpha_2\big)x\sp1 +
\sum_{i=3}\sp G \alpha_ix\sp i + \kappa,
\end{equation}
where
\begin{equation}\label{eq:kappabound2}
\abs\kappa\leq N\alpha_2\big(4KLd\Delta t\sp 2 + KL\sp2 \frac{d\sp3-d}{3}(1+L\Delta
  t)\sp{d-2}\Delta t\sp3\big).
\end{equation}
\end{enumerate}

If case (1) holds we let 
\begin{equation*}
\hat z = \alpha_1\tilde x\sp 1 + \frac{\alpha_1}{N-1}\tilde x\sp 2 +
\big(\alpha_2-\frac{\alpha_1}{N-1}\big)x\sp 2 + \sum_{i=3}\sp G
\alpha_i x\sp i,
\end{equation*}
and remove $i=1$ from $I$ and $\alpha_1$ from $J$. If case (2) holds
we let
\begin{equation*}
\hat z=(N-1)\alpha_2\tilde x\sp1 +
\alpha_2\tilde x\sp 2 + \big(\alpha_1-(N-1)\alpha_2\big)x\sp1 +
\sum_{i=3}\sp G \alpha_ix\sp i,
\end{equation*}
and replace $\alpha_1$ with $(N-1)\alpha_2$ in $J$.
We then iterate the process above with $z$ replaced by $\hat z$, and
the new sets $I$ and $J$. We continue this process until the sets $I$
and $J$ are empty, and we have an approximation $\tilde z$ of $z$ of
the form
\begin{equation}\label{eq:tildez}
\tilde z = \sum_{i}\tilde\alpha_i \tilde x\sp i,
\end{equation}
where every $\tilde x\sp i$ is of the form
\begin{equation*}
\tilde x\sp i = x_0 + \Delta t(mb_1 + \cdots) +\ \text{higher order terms,}
\end{equation*}
with $m$ an integer greater than or equal to  one. 
We note that the factor $N\alpha_2$, appearing in the error estimate
in \eqref{eq:kappabound2} is the same as the weights of the new points
$\tilde x\sp1$ and $\tilde x\sp2$ in \eqref{eq:kappacomb2}. The same
holds also for case (1), with \eqref{eq:kappabound} and
\eqref{eq:kappacomb}. Since the total weight of the points that have
been changed can not be larger than one, we have that 
\begin{equation}\label{eq:zerr1}
\abs{\tilde z - z} \leq 4KLd\Delta t\sp2+KL\sp2 \frac{d\sp3-d}{3}(1+L\Delta
t)\sp{d-2}\Delta t\sp3.
\end{equation}

\emph{Step 4.} We now approximate the point $\tilde z$ in \eqref{eq:tildez}
by a point in $\varphi\sp d(x_0+\Delta t f_1(x_0))$. Consider any
point $\tilde x\sp i$ in the convex combination  in
\eqref{eq:tildez}. When $\tilde x\sp i$ is computed by equation
\eqref{eq:pointinphidplusone} we know that at least one of the indices must
equal one. Let us assume that $i_n=1$. We denote by $\bar x\sp i$ the
element in $\varphi\sp d(x_0+\Delta t f_1(x_0))$ we obtain by
switching the indices $i_1$ and $i_n=1$ in the expression for $\tilde
x\sp i$ in \eqref{eq:pointinphidplusone}. We then have the difference
\begin{multline*}
\bar x\sp i -\tilde x\sp i= \Delta
t\sp2\big((A_{i_2}+\cdots+A_{i_{n-1}})(b_{i_n}-b_{i_1})
+A_{i_1}(b_{i_2}+\cdots+b_{i_n})\\
-A_{i_n}(b_{i_1}+\cdots+b_{i_{n-1}})\big)+\ \text{higher order terms.}
\end{multline*}
We may get a bound for the magnitude of the difference $\bar x\sp
i-\tilde x\sp i$ by using the bounds on $\abs{A_i}$ and $\abs{b_i}$ from
\eqref{eq:disccontrol3} and \eqref{eq:disccontrol4} and the bound on the higher order terms of $\bar
x\sp i$ and $\tilde x\sp i$ in \eqref{eq:higherorderbound}. We get the
largest possible difference if $n=d+1$:
\begin{equation*}
\abs{\bar x\sp i-\tilde x\sp i} \leq (4d-2)KL\Delta t\sp 2 +  KL\sp
2\frac{d\sp 3-d}{3}(1+L\Delta t)\sp{d-2}\Delta t\sp 3.
\end{equation*}  
Now let 
\begin{equation*}
\bar z = \sum_i\tilde\alpha_i\bar x\sp i.
\end{equation*}
Since $\sum_i\tilde\alpha_i=1$, we therefore have that
\begin{equation}\label{eq:zerr2}
\abs{\bar z-\tilde z} \leq (4d-2)KL\Delta t\sp 2 +  KL\sp
2\frac{d\sp 3-d}{3}(1+L\Delta t)\sp{d-2}\Delta t\sp 3.
\end{equation}

\emph{Step 5.} We now consider the contribution to the error from the
fact that we may not have \eqref{eq:linearized}, but instead the
functions $f_i$ satisfy  \eqref{eq:disccontrol2}. Denote by $P$ the
set $\text{co}\big(\varphi\sp{d+1}(x_0)\big)$ when it is computed
using  \eqref{eq:linearized}. The set
$\text{co}\big(\varphi\sp{d+1}(x_0)\big)$ in the general case
satisfies the inclusion
\begin{equation*}
\text{co}\big(\varphi\sp{d+1}(x_0)\big) \subset P+rB,
\end{equation*}
where 
\begin{multline*}
r=SK\sp 2\Delta t\sp 3 + 4 SK\sp 2 \Delta t\sp 3 +\cdots + d\sp2
SK\sp2\Delta t\sp3\\ 
= SK\sp2 \frac{d(d+1)(2d+1)}{6}\Delta t\sp3.
\end{multline*}
An error of size $r$ is made also when 
\begin{equation*}
\cup_{x\in\varphi(x_0)}\text{co}\big(\varphi\sp d(x)\big)
\end{equation*}
is approximated using \eqref{eq:linearized}. 
This, together with \eqref{eq:zerr1}, \eqref{eq:zerr2}, and $M=d+1$
gives \eqref{eq:convincl}. 
\end{proof}

\begin{thm}\label{thm:psiconvexhull}
Assume that \eqref{eq:disccontrol1}, \eqref{eq:disccontrol2},
\eqref{eq:disccontrol3}, and \eqref{eq:disccontrol4} hold. Then 
\begin{equation}\label{eq:thirdorderincl}
\psi\big(\text{co}(\varphi\sp{M-1}(z))\big) \subset
\text{co}\big(\varphi\sp M(z)\big) +
SK\sp2\frac{M(M-1)(2M-1)}{3}\Delta t\sp3B.
\end{equation}
\end{thm}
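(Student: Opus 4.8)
The plan is to take an arbitrary point of the left-hand side, pair it with an explicitly constructed point of $\text{co}(\varphi^M(z))$, and show the two differ by a quantity that is cubic in $\Delta t$ and bounded by the stated radius. By the definitions of $\psi$ and $\text{co}$, a point $p\in\psi\big(\text{co}(\varphi^{M-1}(z))\big)$ takes the form $p=w+\Delta t\sum_{i=1}^M\mu_i f_i(w)$, where $w\in\text{co}(\varphi^{M-1}(z))$, $\mu_i\ge0$ and $\sum_i\mu_i=1$. Applying the Carath\'eodory Theorem (Theorem~\ref{thm:Caratheodory}) I would write $w=\sum_j\lambda_j w_j$ as a finite convex combination of points $w_j\in\varphi^{M-1}(z)$. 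The comparison point I would use is the one built from the product weights,
\[
\tilde p=\sum_{j,i}\lambda_j\mu_i\bigl(w_j+\Delta t\,f_i(w_j)\bigr),
\]
which belongs to $\text{co}(\varphi^M(z))$ since each $w_j+\Delta t\,f_i(w_j)\in\varphi(\varphi^{M-1}(z))=\varphi^M(z)$ and the weights $\lambda_j\mu_i$ are nonnegative with sum one. Expanding and using $\sum_j\lambda_j=\sum_i\mu_i=1$, the zeroth-order parts reproduce $w$ exactly and the difference collapses to
\[
p-\tilde p=\Delta t\sum_{i=1}^M\mu_i\Bigl(f_i(w)-\sum_j\lambda_j f_i(w_j)\Bigr).
\]

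The heart of the proof is to show that this residual is genuinely of order $\Delta t^3$. For each $i$ I would expand $f_i(w_j)$ about $w$ by the smoothness hypothesis \eqref{eq:disccontrol2}, writing $f_i(w_j)=f_i(w)+f_i'(w)(w_j-w)+E_{ij}$ with $|E_{ij}|\le S|w_j-w|^2$. Averaging against $\lambda_j$ and invoking the identity $\sum_j\lambda_j(w_j-w)=0$, the first-order Jacobian term cancels identically, leaving $\bigl|f_i(w)-\sum_j\lambda_j f_i(w_j)\bigr|\le S\sum_j\lambda_j|w_j-w|^2$. I would then bound this weighted variance using the elementary identity $\sum_j\lambda_j|w_j-w|^2=\tfrac12\sum_{j,k}\lambda_j\lambda_k|w_j-w_k|^2\le\tfrac12\,\diam\big(\varphi^{M-1}(z)\big)^2$. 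Finally, because every point of $\varphi^{M-1}(z)$ is obtained from $z$ by $M-1$ Euler steps, each of length at most $K\Delta t$ by \eqref{eq:disccontrol3}, each such point lies within $(M-1)K\Delta t$ of $z$, so $\diam\big(\varphi^{M-1}(z)\big)\le 2(M-1)K\Delta t$.

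Putting these together yields $|p-\tilde p|\le\Delta t\cdot S\cdot\tfrac12\bigl(2(M-1)K\Delta t\bigr)^2=2SK^2(M-1)^2\Delta t^3$, and the elementary inequality $2(M-1)^2\le\tfrac13 M(M-1)(2M-1)$---equivalent to $(2M-3)(M-2)\ge0$ and hence valid for every integer $M\ge1$---shows this is at most $SK^2\frac{M(M-1)(2M-1)}{3}\Delta t^3$. Since $p$ was arbitrary, this gives $\psi\big(\text{co}(\varphi^{M-1}(z))\big)\subset\text{co}(\varphi^M(z))+SK^2\frac{M(M-1)(2M-1)}{3}\Delta t^3 B$, which is \eqref{eq:thirdorderincl}. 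The step I expect to be the crux is the first-order cancellation: it is exactly the choice of the product weights $\lambda_j\mu_i$ for $\tilde p$ that forces the term $\sum_j\lambda_j(w_j-w)$ to vanish and thereby upgrades the naive Lipschitz estimate of order $\Delta t$ into the quadratic remainder of order $\Delta t^3$; it is worth noting that, as a consequence, only \eqref{eq:disccontrol2} and \eqref{eq:disccontrol3} are needed here, and the Jacobian bound \eqref{eq:disccontrol4} plays no role.
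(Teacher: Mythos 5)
Your proof is correct, and it takes a genuinely different route from the paper's. The paper first replaces each $f_i$ by its linearization \eqref{eq:simplefuncs} at $z$; for linear $f_i$ it proves the exact inclusion \eqref{eq:psico} by maximizing linear functionals $x\mapsto v\cdot\big(x+\Delta t\sum_i\alpha_i f_i(x)\big)$ over a compact set and noting that the maximum is attained at a point of $\varphi^{M-1}(z)$ with a single $\alpha_i$ equal to one (giving the identity \eqref{eq:inclusionequality}); it then pays a perturbation price $r=SK^2\frac{M(M-1)(2M-1)}{6}\Delta t^3$ twice to pass between the linearized and the general dynamics, which produces the radius in \eqref{eq:thirdorderincl}. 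You never linearize: you build an explicit comparison point in $\text{co}\big(\varphi^M(z)\big)$ with the product weights $\lambda_j\mu_i$, and you obtain the cubic order from the barycentric cancellation $\sum_j\lambda_j(w_j-w)=0$ in the Taylor expansion about $w$, followed by the variance identity and the diameter bound $\diam\big(\varphi^{M-1}(z)\big)\le 2(M-1)K\Delta t$. Your route is more elementary (no compactness/maximizer or supporting-functional argument), it is a purely one-step comparison that never compares whole Euler trajectories of the true and linearized fields --- a point the paper's computation of $r$ in fact elides, since linearization errors committed in early steps should propagate through later ones with factors $(1+L\Delta t)$ --- and it yields the sharper constant $2SK^2(M-1)^2\Delta t^3$, which you correctly relax to the stated radius via $2(M-1)^2\le\frac{M(M-1)(2M-1)}{3}$ for integers $M\ge1$. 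Your closing observation that \eqref{eq:disccontrol4} is never needed is consistent with the paper as well: its proof of this theorem invokes only \eqref{eq:disccontrol2} and \eqref{eq:disccontrol3}. What the paper's approach buys instead is the exact convexity statement \eqref{eq:inclusionequality} for linear vector fields, which has independent interest and parallels the linearization strategy reused in Theorem \ref{thm:coco}.
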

\begin{proof}
To begin with, let us assume that 
\begin{equation}\label{eq:simplefuncs}
f_i(x)=b_i+A_i(x-z),\quad \text{for }1\leq i\leq M.
\end{equation}
We will show that this implies that 
\begin{equation}\label{eq:psico}
\psi\big(\text{co}(\varphi\sp{M-1}(z))\big) \subset
\text{co}\big(\varphi\sp M(z)\big).
\end{equation}
Let $v$ be any unit vector in $\Re\sp d$ and consider the function
\begin{equation*}
(x,\alpha_1,\ldots,\alpha_M)\mapsto v\cdot(x+\sum_{i=1}\sp M \alpha_i f_i(x)),
\end{equation*}
over the set
\begin{equation*}
\big\{x,\alpha_1,\ldots,\alpha_M : x\in \text{co}(\varphi\sp{M-1}(z)),
\alpha_i\in\Re, \alpha_i\geq 0, \sum_{i=1}\sp M \alpha_i=1\big\}.
\end{equation*}
The function is continuous, and the set is compact, and hence there is
a maximizer
\begin{equation*}
x\sp*,\alpha_1\sp*,\ldots,\alpha_M\sp*.
\end{equation*}
We note two things:
\begin{enumerate}
\item Since the function
\begin{equation*}
x\mapsto v\cdot(x+\Delta t\sum_{i=1}\sp M \alpha_i\sp* f_i(x))
\end{equation*}
is linear, 
its maximum over $\text{co}\big(\varphi\sp{M-1}(z)\big)$ is attained at a point in
$\varphi\sp{M-1}(z)$. 
\item Since the function
\begin{equation*}
(\alpha_1,\ldots,\alpha_M)\mapsto v\cdot(x\sp*+\Delta t\sum_{i=1}\sp M \alpha_i f_i(x\sp*))
\end{equation*}
is linear, its maximum over 
\begin{equation*}
\big\{\alpha_1,\ldots,\alpha_M:\alpha_i\in\Re,\alpha_i\geq0, \sum_{i=1}\sp{M}\alpha_i=1\big\}
\end{equation*}
is attained at a point where one of the $\alpha_i$:s are one.
\end{enumerate}
Since $v$ can be any element in $\Re\sp d$,  these facts imply that
\begin{equation}\label{eq:inclusionequality}
\text{co}\big(\psi\big(\text{co}(\varphi\sp{M-1}(z))\big)\big) =
\text{co}\big(\varphi\sp N(z)\big),
\end{equation} 
which implies \eqref{eq:psico}.

Now let $b_i=f_i(z)$ and $A_i=f'_i(z)$. Let us denote by $P$ the set
in \eqref{eq:inclusionequality} when the functions in
\eqref{eq:simplefuncs} are used. 
By \eqref{eq:disccontrol2} and \eqref{eq:disccontrol3} we have that 
\begin{equation}\label{eq:incl1}
\psi\big(\text{co}(\varphi\sp{M-1}(z))\big) \subset P + rB,
\end{equation} 
where
\begin{multline*}
r=SK\sp 2\Delta t\sp 3 + 4 SK\sp 2 \Delta t\sp 3 +\cdots + (M-1)\sp2
SK\sp2\Delta t\sp3\\ 
= SK\sp2 \frac{M(M-1)(2M-1)}{6}\Delta t\sp3.
\end{multline*}
Similarly,
\begin{equation}\label{eq:incl2}
P\subset \text{co}\big(\varphi\sp{M}(z)\big)+rB.
\end{equation}
The inclusion \eqref{eq:thirdorderincl} follows by \eqref{eq:incl1} and \eqref{eq:incl2}. 
\end{proof}

\bibliographystyle{plain}

\bibliography{references} 


\end{document}